\newcommand{\klockan}{\the\hours:{\ifnum\minutes<10 0\fi}\the\minutes}
\newcommand{\tid}{\today\ \klockan}
\newcommand{\prtid}{\smash{\raise 10mm \hbox{\LaTeX ed \tid}}}
\renewcommand{\prtid}{}
\def\sectionmark#1{} 
\def\subsectionmark#1{}
\newcommand{\sectnr}{\ifnum \c@secnumdepth >\z@
                 \thesection.\hskip 1em\relax \fi}
\def\@evenhead{\footnotesize\rm\thepage\hfil\leftmark\hfil\llap{\prtid}}
\def\@oddhead{\footnotesize\rm\rlap{\prtid}\hfil\rightmark\hfil\thepage}
\def\tableofcontents{\section*{Contents} 
 \@starttoc{toc}}
\def\@biblabel#1{#1.}
\let\Thebibliography=\thebibliography
\renewcommand{\thebibliography}[1]{\def\@mkboth##1##2{}\Thebibliography{#1}
\addcontentsline{toc}{section}{References}
\frenchspacing 
\setlength{\@topsep}{0pt}
\setlength{\itemsep}{0pt}%
\setlength{\parskip}{0pt plus 2pt}%
}
\def\@seccntformat#1{\csname the#1\endcsname.\quad}
\newcommand{\authortitle}[2]{\author{#1}\title{#2}\markboth{#1}{#2}}
\newcommand{\auth}[2]{{#1, #2.}}
\newcommand{\art}[6]{{\sc #1, \rm #2, \it #3\/ \bf #4 \rm (#5), \mbox{#6}.}}
\newcommand{\book}[3]{{\sc #1, \it #2, \rm #3.}}
\newcommand{\AND}{{\rm and }}
\newtheoremstyle{descriptive}%
  {\topsep}   
  {\topsep}   
  {\rmfamily} 
  {}          
  {\bfseries} 
  {.}         
  { }         
  {}          
\newtheoremstyle{propositional}%
  {\topsep}   
  {\topsep}   
  {\itshape}  
  {}          
  {\bfseries} 
  {.}         
  { }         
  {}          
\newtheoremstyle{remarkstyle}%
  {\topsep}   
  {\topsep}   
  {\rmfamily}  
  {}          
  {\itshape} 
  {.}         
  { }         
  {}          
\theoremstyle{propositional}
\newtheorem{thm}{Theorem}
\newtheorem{cor}[thm]{Corollary}
\theoremstyle{descriptive}
\newtheorem{deff}[thm]{Definition}
\theoremstyle{remarkstyle}
\newtheorem{remark}[thm]{Remark}
\renewenvironment{proof}[1][\proofname]{\par
  \pushQED{\qed}%
  \normalfont
  \trivlist
  \item[\hskip\labelsep
        \itshape
    #1\@addpunct{.}]\ignorespaces
}{%
  \popQED\endtrivlist\@endpefalse
}
\def\vint{\mathop{\mathchoice%
          {\setbox0\hbox{$\displaystyle\intop$}\kern 0.22\wd0%
           \vcenter{\hrule width 0.6\wd0}\kern -0.82\wd0}%
          {\setbox0\hbox{$\textstyle\intop$}\kern 0.2\wd0%
           \vcenter{\hrule width 0.6\wd0}\kern -0.8\wd0}%
          {\setbox0\hbox{$\scriptstyle\intop$}\kern 0.2\wd0%
           \vcenter{\hrule width 0.6\wd0}\kern -0.8\wd0}%
          {\setbox0\hbox{$\scriptscriptstyle\intop$}\kern 0.2\wd0%
           \vcenter{\hrule width 0.6\wd0}\kern -0.8\wd0}}%
          \mathopen{}\int}
\newcommand{\grad}{\nabla}
\DeclareMathOperator{\diam}{diam}
\DeclareMathOperator{\Div}{div}
\DeclareMathOperator{\Lip}{Lip}
\newcommand{\loc}{_{\rm loc}}
\newcommand{\Om}{\Omega}
\renewcommand{\phi}{\varphi}
\newcommand{\la}{\lambda}
\newcommand{\p}{{$p\mspace{1mu}$}}
\newcommand{\R}{\mathbf{R}}
\newcommand{\Bh}{{\widehat{B}}}
\newcommand{\Wploc}{W^{1,p}\loc}
\newenvironment{ack}{\medskip{\it Acknowledgement.}}{}
\begin{document}

\authortitle{Anders Bj\"orn and Jana Bj\"orn}
{Tensor products and sums of \p-harmonic functions, quasiminimizers
and \p-admissible weights}
\author{
Anders Bj\"orn \\
\it\small Department of Mathematics, Link\"oping University, \\
\it\small SE-581 83 Link\"oping, Sweden\/{\rm ;}
\it \small anders.bjorn@liu.se
\\
\\
Jana Bj\"orn \\
\it\small Department of Mathematics, Link\"oping University, \\
\it\small SE-581 83 Link\"oping, Sweden\/{\rm ;}
\it \small jana.bjorn@liu.se
}

\date{Preliminary version, \today}
\date{}
\maketitle

\noindent{\small {\bf Abstract}. 
The tensor product of two \p-harmonic functions is in general
not \p-harmonic, but we show that it is a quasiminimizer.
More generally, we show that the tensor product
of two quasiminimizers is a quasiminimizer.
Similar results are also obtained for quasisuperminimizers and for tensor sums.
This is done in weighted $\R^n$ with \p-admissible weights.
It is also shown that the tensor product of two
\p-admissible measures
is \p-admissible.
This last result
is 
generalized to metric spaces.
}

\bigskip
\noindent
{\small \emph{Key words and phrases}:
doubling measure,
metric space,
\p-admissible weight,
\p-harmonic function,
Poincar\'e inequality,
quasiminimizer,
quasisuperminimizer,
tensor product,
tensor sum.
}

\medskip
\noindent
{\small Mathematics Subject Classification (2010):
Primary: 31C45; Secondary: 35J60, 46E35.
}

\section{Introduction}

It is well known (and easy to prove) that the tensor product and tensor  sum
of two harmonic functions are harmonic, i.e.\
if $u_j$ is harmonic in $\Om_j \subset \R^{n_j}$, $j=1,2$,
then $u_1 \otimes u_2$ and $u_1 \oplus u_2$
are harmonic in $\Om_1 \times \Om_2 \subset \R^{n_1+n_2}$.
Here 
\[
(u_1 \otimes u_2)(x,y):=u_1(x) u_2(y)
\quad \text{and} \quad  
(u_1 \oplus u_2)(x,y):=u_1(x)+ u_2(y).
\]
It is also well known that the corresponding property for
\p-harmonic functions fails.
However, as we show in this note,
the tensor product of two \p-harmonic functions
is a quasiminimizer.

Here $u \in \Wploc(\Om)$ is \emph{\p-harmonic}
in the open set $\Om \subset \R^n$
if it is a continuous weak solution of the \p-Laplace equation
\[
     \Delta_p u:=
       \Div( |\nabla u|^{p-2} \nabla u) =0, \quad 1 < p< \infty.
\]
Moreover, $u \in \Wploc(\Om)$ is a \emph{Q-quasiminimizer} if
\[ 
      \int_{\phi \ne 0} |\grad u|^p\, dx
	\le   Q    \int_{\phi \ne 0} |\grad (u+\phi)|^p\, dx
\] 
for all boundedly supported Lipschitz functions
$\phi$ vanishing outside $\Om$.
A quasiminimizer always has a continuous representative, and
if $Q=1$ this representative is a \p-harmonic function.

In this note we show the following result.

\begin{thm}  \label{thm-tensor-prod}
Let $1<p<\infty$, and let
$u_j$ be a $Q_j$-quasiminimizer in $\Om_j\subset\R^{n_j}$
with respect to a \p-admissible weight $w_j$, $j=1,2$.
Then  $u=u_1 \otimes u_2$ and $v=u_1 \oplus u_2$
are $Q$-quasiminimizers in $\Om_1 \times \Om_2$ with respect to
the \p-admissible weight $w=w_1 \otimes w_2$, where
\begin{equation}   \label{eq-Q-intro}
    Q= \begin{cases}
      \Bigl( Q_1^{2/|p-2|} + Q_2^{2/|p-2|} \Bigr)^{|p-2|/2}, & \text{if } p \ne 2, \\
      \max\{Q_1,Q_2\}, & \text{if } p = 2.
      \end{cases}
\end{equation}
In particular, if $u_1$ and $u_2$ are \p-harmonic, then 
$u$ and $v$ are $Q$-quasiminimizers with $Q=2^{|p-2|/2}$.
\end{thm}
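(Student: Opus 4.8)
The plan is to fix an arbitrary boundedly supported Lipschitz function $\psi$ vanishing outside $\Om_1\times\Om_2$, set $E=\{\psi\ne0\}$, and establish
\[
\int_E |\grad u|^p w\,dx\,dy \le Q\int_E |\grad(u+\psi)|^p w\,dx\,dy
\]
together with the analogous inequality for $v$, in each case by reducing it to one elementary inequality about integrals. I would treat as routine bookkeeping the facts that $u,v\in\Wploc(\Om_1\times\Om_2)$ with respect to $w$ and that $w=w_1\otimes w_2$ is itself \p-admissible: the former follows from Fubini's theorem and the local boundedness of the continuous representatives of $u_1,u_2$, using $\grad u=(u_2\grad_x u_1,\,u_1\grad_y u_2)$ and $\grad v=(\grad_x u_1,\,\grad_y u_2)$, while the latter is the tensor-product statement for \p-admissible weights established separately in this paper.

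For the \emph{slicing step}, consider first $u=u_1\otimes u_2$ and put $f_1=|u_2|\,|\grad_x u_1|$, $f_2=|u_1|\,|\grad_y u_2|$, $g_1=|u_2\grad_x u_1+\grad_x\psi|$, $g_2=|u_1\grad_y u_2+\grad_y\psi|$, so that $|\grad u|^2=f_1^2+f_2^2$ and $|\grad(u+\psi)|^2=g_1^2+g_2^2$ pointwise on $\Om_1\times\Om_2$. For each $y\in\Om_2$ with $u_2(y)\ne0$ the function $x\mapsto\psi(x,y)/u_2(y)$ is boundedly supported, Lipschitz and vanishes outside $\Om_1$; testing the $Q_1$-quasiminimality of $u_1$ against it and multiplying through by $|u_2(y)|^p$ gives
\[
\int_{E_y}f_1^p\,w_1\,dx\le Q_1\int_{E_y}g_1^p\,w_1\,dx,\qquad E_y:=\{x:\psi(x,y)\ne0\},
\]
which holds trivially also when $u_2(y)=0$ since then $f_1(\cdot,y)\equiv0$. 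Multiplying by $w_2(y)$, integrating over $\Om_2$ and applying Tonelli's theorem yields $\int_E f_1^p w\le Q_1\int_E g_1^p w$, and symmetrically $\int_E f_2^p w\le Q_2\int_E g_2^p w$. For $v=u_1\oplus u_2$ one argues identically, now with $f_1=|\grad_x u_1|$, $f_2=|\grad_y u_2|$, $g_1=|\grad_x u_1+\grad_x\psi|$, $g_2=|\grad_y u_2+\grad_y\psi|$, testing the $Q_j$-quasiminimality of $u_j$ directly with the slices of $\psi$ (no rescaling needed); again $|\grad v|^2=f_1^2+f_2^2$, $|\grad(v+\psi)|^2=g_1^2+g_2^2$, and $\int_E f_j^p w\le Q_j\int_E g_j^p w$.

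It then remains to prove the \emph{elementary inequality}: if $f_1,f_2,g_1,g_2\ge0$ are measurable on a measure space $(X,\mu)$ and $\int f_j^p\,d\mu\le Q_j\int g_j^p\,d\mu$ for $j=1,2$, then $\int(f_1^2+f_2^2)^{p/2}\,d\mu\le Q\int(g_1^2+g_2^2)^{p/2}\,d\mu$ with $Q$ as in \eqref{eq-Q-intro}; applying this with $d\mu=w\,dx\,dy$ restricted to $E$ finishes the proof for $u$ and for $v$ at once. For $p=2$ this is immediate from $Q_1s+Q_2t\le\max\{Q_1,Q_2\}(s+t)$. For $p>2$, set $\alpha=2/(p-2)$ and pick $c_1,c_2>0$ with $c_1^{-\alpha}+c_2^{-\alpha}=1$; convexity of $r\mapsto r^{p/2}$ gives the pointwise bound $(f_1^2+f_2^2)^{p/2}\le c_1f_1^p+c_2f_2^p$, while superadditivity of $r\mapsto r^{p/2}$ gives $g_1^p+g_2^p\le(g_1^2+g_2^2)^{p/2}$; integrating the first, inserting the hypotheses, choosing $c_1Q_1=c_2Q_2$ (the common value being forced to be $(Q_1^\alpha+Q_2^\alpha)^{1/\alpha}$ by the constraint) and then using the second bound yields $Q=(Q_1^\alpha+Q_2^\alpha)^{1/\alpha}$. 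For $1<p<2$, set $\beta=2/(2-p)$; now subadditivity of $r\mapsto r^{p/2}$ gives $(f_1^2+f_2^2)^{p/2}\le f_1^p+f_2^p$, and Hölder's inequality for the pairing of $(Q_1,Q_2)$ with $(g_1^p,g_2^p)$ with conjugate exponents $\beta$ and $2/p$ gives exactly $Q_1g_1^p+Q_2g_2^p\le(Q_1^\beta+Q_2^\beta)^{1/\beta}(g_1^2+g_2^2)^{p/2}$; combining these yields $Q=(Q_1^\beta+Q_2^\beta)^{1/\beta}$. Since $1/\alpha$ and $1/\beta$ both equal $|p-2|/2$, this is precisely \eqref{eq-Q-intro}, and $Q_1=Q_2=1$ gives $Q=2^{|p-2|/2}$.

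The one step that is not mechanical is the choice of the two mutually dual elementary inequalities above: for $p>2$ one must bound $(f_1^2+f_2^2)^{p/2}$ from above \emph{generously} (free constants $c_1,c_2$) but $g_1^p+g_2^p$ from below \emph{sharply}, whereas for $p<2$ generosity and sharpness switch sides, and only the balanced choice $c_1Q_1=c_2Q_2$ — respectively the sharp Hölder constant in $Q_1s^{p/2}+Q_2t^{p/2}\le Q(s+t)^{p/2}$ — produces the $\ell^{2/|p-2|}$-type value of $Q$ rather than a weaker bound such as $(Q_1^{2/p}+Q_2^{2/p})^{p/2}$ or $Q_1+Q_2$. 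Everything else — the slicing, Fubini, the rescaling device $\psi(x,y)/u_2(y)$, and the identifications $|\grad u|^2=f_1^2+f_2^2$ and $|\grad(u+\psi)|^2=g_1^2+g_2^2$ — is routine.
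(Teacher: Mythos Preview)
Your argument is correct and follows the same route as the paper: slice in one variable, use the quasiminimality of $u_j$ on each slice, integrate via Fubini to obtain $\int_E f_j^p\,w\le Q_j\int_E g_j^p\,w$, and then combine these with the same pointwise Hölder/convexity inequalities (sub- vs.\ superadditivity of $r\mapsto r^{p/2}$ depending on whether $p\lessgtr 2$) to reach the stated $Q$. The only omission is the degenerate case $Q_j=0$ for $p>2$, where your optimization $c_1Q_1=c_2Q_2$ breaks down; the paper handles this separately (then $f_j\equiv 0$ a.e.\ and the bound with $Q=Q_{3-j}$ is immediate).
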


We also obtain a corresponding result
for quasisuperminimizers. 
We pursue our studies on weighted $\R^n$ with respect to so-called
\p-admissible weights.
To do so, we first show that
the 
product of two \p-admissible measures is \p-admissible, which
we do in  Section~\ref{sect-admissible}.
This generalizes some earlier special cases from
Lu--Wheeden~\cite[Lemma~2]{LuWhe},
Kilpel\"ainen--Koskela--Masaoka~\cite[Lemma~2.2]{KilKoMa} 
and Bj\"orn~\cite[Lemma~11]{JBFennAnn01}, but
we have not seen it proved in this form in the literature.
In fact, our result holds in the generality of metric spaces, 
see Remark~\ref{rmk-metric}.

Usually, 
$Q \ge 1$ in the definition of $Q$-quasiminimizers
but here it is convenient to also allow for $Q=0$ 
(which happens exactly when $u$ is a.e.\ constant
in every component of $\Om$).
For example, if $Q_2=0$ then $Q=Q_1$ in Theorem~\ref{thm-tensor-prod}.
Even this special case of Theorem~\ref{thm-tensor-prod}
seems to have gone unnoticed in the literature.

Quasiminimizers were introduced by
Giaquinta and Giusti~\cite{GG1}, \cite{GG2}  in the early 1980s 
 as a tool for a unified
treatment of variational integrals, elliptic equations and
quasiregular mappings on $\R^n$.
In those papers, De Giorgi's
method was extended to quasiminimizers, yielding in particular
their local H\"older continuity.
Quasiminimizers have since then been studied in a large number of papers,
first on unweighted $\R^n$ and later on metric spaces,
see Appendix~C in Bj\"orn--Bj\"orn~\cite{BBbook}
and the 
introduction in Bj\"orn~\cite{JBAdvMath}
for further discussion and references.

Quasi\-mi\-ni\-miz\-ers form
a much more flexible
class than \p-harmonic functions. 
For example, Martio--Sbordone~\cite{MaSb} showed that 
quasiminimizers
have an interesting and nontrivial theory also in one dimension,
and Kinnunen--Martio~\cite{KiMa03} developed
an interesting nonlinear potential theory for quasiminimizers, including
quasisuperharmonic functions.
Unlike \p-harmonic functions and solutions of elliptic PDEs, 
quasiminimizers can have singularities of any order, as shown in
Bj\"orn--Bj\"orn~\cite{BBpower}.

\begin{ack}
The authors 
were supported by the Swedish Research Council, grants 621-2007-6187, 621-2008-4922,
621-2014-3974 and 2016-03424.
We thank Nageswari Shanmugalingam for a fruitful discussion
concerning Theorem~\ref{thm-adm-converse}.
\end{ack}

\section{Tensor products of \texorpdfstring{\p}{p}-admissible 
measures}
\label{sect-admissible}

Let $w$ be a weight function on $\R^n$, i.e.\ a nonnegative locally 
integrable function, and let $d\mu= w\,dx$.
In this section we also let $1 \le p < \infty$ be fixed.
For a ball $B=B(x_0,r):=\{x: |x-x_0|<r\}$ in~$\R^n$ we use the 
notation $\la B=B(x_0,\la r)$.

\begin{deff}
The measure $\mu$ (or the weight $w$)
 is \emph{\p-admissible} 
if the following two conditions hold:
\begin{itemize}
\item It is \emph{doubling}, i.e.\ 
there exists a \emph{doubling constant} $C>0$ such that for all balls~$B$,
\begin{equation*}
        0 < \mu(2B) \le C \mu(B) < \infty.
\end{equation*}
\item It supports a \emph{\p-Poincar\'e inequality}, i.e.\ 
there exist constants $C>0$ and $\lambda \ge 1$
such that for all balls $B$
and all bounded locally Lipschitz functions $u$ on $\la B$,
\[ 
\vint_{B} |u-u_B| \,d\mu
    \le C \diam(B) \biggl( \vint_{\lambda B} |\nabla u|^{p} \,d\mu \biggr)^{1/p},
\] 
where  $\nabla u$ is the a.e.\ defined gradient of $u$ and
$u_B :=\vint_B u \,d\mu
:= \mu(B)^{-1}\int_B u\, d\mu$.
\end{itemize}
\end{deff}

This is one of many equivalent definitions of \p-admissible 
weights in the literature,
see e.g.\ Corollary~20.9 in Heinonen--Kilpel\"ainen--Martio~\cite{HeKiMa}
(which is not in the first edition) and
Proposition~A.17 in Bj\"orn--Bj\"orn~\cite{BBbook}.
It can be shown that 
on $\R^n$, the dilation 
$\la$ in the Poincar\'e inequality can be taken equal to 1,
see Jerison~\cite{Jerison}, Haj\l asz--Koskela~\cite{HaKo-CR} and
the discussion in \cite[Chapter~20]{HeKiMa}.

It is not known whether there exist any admissible measures on $\R^n$,
$n\ge2$, which are not absolutely continuous with respect to the
Lebesgue measure (and thus given by admissible weights).
(On $\R$ all \p-admissible measures are 
absolutely continuous, and even $A_p$ weights,
see Bj\"orn--Buckley--Keith~\cite{BjBuKe}.)
We therefore formulate our next result in terms of \p-admissible measures.

\begin{thm} \label{thm-p-adm-tensor}
Let $\mu_1$ and $\mu_2$ be \p-admissible measures on $\R^{n_1}$ and $\R^{n_2}$,
respectively. 
Then the product measure $\mu=\mu_1\times\mu_2$
is \p-admissible on $\R^{n_1+n_2}$.
\end{thm}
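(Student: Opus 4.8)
The plan is to verify the two defining properties of \p-admissibility—doubling and the \p-Poincar\'e inequality—separately for the product measure $\mu = \mu_1 \times \mu_2$ on $\R^{n_1+n_2}$.

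\textbf{Doubling.} For the doubling property the key observation is that although a ball $B(z_0,r) \subset \R^{n_1+n_2}$ with $z_0=(x_0,y_0)$ is not a product of balls, it is comparable to the product box $B(x_0,r) \times B(y_0,r)$ in the sense of two-sided inclusions: $B(z_0,r/\sqrt2) \subset B(x_0,r/\sqrt2)\times B(y_0,r/\sqrt2)$ and $B(x_0,r)\times B(y_0,r) \subset B(z_0,2r)$ (up to harmless dimensional constants). Since $\mu(B(x_0,r)\times B(y_0,r)) = \mu_1(B(x_0,r))\,\mu_2(B(y_0,r))$ by Fubini, and each factor is doubling, the doubling constant of $\mu$ is bounded by a fixed power of $\max\{C_1,C_2\}$ depending only on $n_1,n_2$ and the dilation constants. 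This step is routine.

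\textbf{Poincar\'e inequality.} This is the main obstacle. The standard strategy is to prove the Poincar\'e inequality on the product box $B(x_0,r)\times B(y_0,r)$ and then transfer it to genuine balls using the comparability just described (shrinking/enlarging balls is permitted because the measure is doubling, by a standard chaining or telescoping argument, cf.\ \cite[Chapter~20]{HeKiMa}). On a box, I would estimate, for a bounded locally Lipschitz $u$ on the box,
\[
\vint_{B_1\times B_2} |u - u_{B_1\times B_2}|\,d\mu
\le 2 \vint_{B_1} \vint_{B_2} |u(x,y) - (u(x,\cdot))_{B_2}|\,d\mu_2(y)\,d\mu_1(x)
 + 2\vint_{B_1}\bigl| (u(x,\cdot))_{B_2} - u_{B_1\times B_2}\bigr|\,d\mu_1(x),
\]
where $(u(x,\cdot))_{B_2}$ denotes the $\mu_2$-average in $y$ for fixed $x$. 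The first term is handled by applying the $\mu_2$-Poincar\'e inequality in the $y$-variable for each fixed $x$ (and then integrating in $x$), producing $\diam(B_2)$ times an average of $|\grad_y u|^p$. For the second term, note that $x \mapsto (u(x,\cdot))_{B_2}$ is Lipschitz on $B_1$ with gradient $\vint_{B_2}\grad_x u(x,y)\,d\mu_2(y)$, so the $\mu_1$-Poincar\'e inequality in the $x$-variable gives $\diam(B_1)$ times an average of $|\grad_x u|^p \le |\grad u|^p$. Combining, and using $\diam(B_j) \le \diam(B_1\times B_2)$ together with the doubling property to compare averages over $B_1\times B_2$ with averages over $\la_1 B_1 \times \la_2 B_2$, one obtains the desired Poincar\'e inequality on boxes with dilation $\max\{\la_1,\la_2\}$ and a constant depending only on $C_1,C_2,\la_1,\la_2,p$.

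The subtle points to be careful about are: (i) justifying that $u(x,\cdot)$ is (for $\mu_1$-a.e.\ $x$, or after a Lipschitz modification) a bounded locally Lipschitz function of $y$ on $\la_2 B_2$ so that the $\mu_2$-Poincar\'e inequality applies, and symmetrically in $x$; (ii) justifying differentiation under the integral sign for $x\mapsto (u(x,\cdot))_{B_2}$, which follows from the Lipschitz bound and dominated convergence; and (iii) the passage from boxes back to balls, where one uses that on the box side the Poincar\'e inequality holds and then a standard argument (e.g.\ via maximal functions or a covering of the ball by boxes) upgrades it—here it is cleanest to invoke that for doubling measures a Poincar\'e inequality on a comparable family of sets yields one on balls, possibly at the cost of enlarging $\la$. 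Once both properties are established, $\mu$ satisfies the definition and is therefore \p-admissible on $\R^{n_1+n_2}$.
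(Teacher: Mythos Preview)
Your approach is correct and essentially the same as the paper's: both verify doubling via the box--ball comparison and prove the Poincar\'e inequality on a product box by inserting the partial average and applying the factor Poincar\'e inequalities to the two resulting terms, then transfer back to balls by doubling. The only noteworthy difference is that for your second term the paper, instead of differentiating under the integral, pulls the absolute value inside and uses Fubini to rewrite it as $\vint_{B_1}\vint_{B_2}|u(s,y)-(u(s,\cdot))_{B_2}|\,d\mu_2(y)\,d\mu_1(s)$, so the $\mu_1$-Poincar\'e inequality is applied directly to $u(\,\cdot\,,y)$ rather than to the averaged function---this neatly avoids your subtle point~(ii).
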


For a function $u$ on an open subset $\Om \subset \R^{n_1+n_2}$
we will denote the gradient by $\nabla u$.
The gradients with respect to the first $n_1$ resp.\ 
the last $n_2$ variables will be denoted by $\nabla_x u$ and $\nabla_y u$.
In this section we will only consider gradients of locally Lipschitz
functions, which are thus defined a.e.\ and coincide with the Sobolev 
gradients determined by the admissible measures, see 
Heinonen--Kilpel\"ainen--Martio~\cite[Lemma~1.11]{HeKiMa}.

\begin{proof}
Let $z=(z_1,z_2) \in \R^{n_1+n_2}$ and $r>0$.
We denote balls in $\R^{n_1}$, $\R^{n_2}$ and $\R^{n_1+n_2}$,
by $B'$, $B''$ and $B$, respectively.
Let
\[
     Q(z,r)=B'(z_1,r) \times B''(z_2,r)
\]
and note that
\begin{equation} \label{eq-Q}
     B(z,r) \subset Q(z,r) \subset B(z, \sqrt{2} r).
\end{equation}
It follows that for $B=B(z,r)$ we have
\begin{align*}
   \mu(2B) & \le \mu(Q(z,2r)) = \mu_1(B'(z_1,2r)) \mu_2(B''(z_2,2r)) \\
   & \le C \mu_1\bigl(B'\bigl(z_1,\tfrac{1}{2}r\bigr)\bigr) 
         \mu_2\bigl(B''\bigl(z_2,\tfrac{1}{2}r\bigr)\bigr) 
    = C \mu\bigl(Q\bigl(z,\tfrac{1}{2}r\bigr)\bigr)      
    \le C \mu(B),
\end{align*}
and hence $\mu$ is doubling.
Here and below, the letter $C$ denotes various positive
constants whose values may vary even within a line.

We now turn to the Poincar\'e inequality.
As mentioned above we can assume that the \p-Poincar\'e
inequalities for $\mu_1$ and $\mu_2$ hold with dilation $\la=1$.
Let $B=B(z,r)$ and  $Q=Q(z,r)= B' \times B''$.
Also let $u$ be an arbitrary bounded locally Lipschitz function on $2B$ and set
\[
c = \vint_Q u\,d\mu = \vint_{B''} \vint_{B'} u(s,t)\,d\mu_1(s)\,d\mu_2(t).
\]
Then by the Fubini theorem,
\begin{align}
\vint_Q|u-c|\,d\mu &\le \vint_{B''} \biggl( \vint_{B'} 
     \biggl| u(x,y)-\vint_{B'} u(s,y)\,d\mu_1(s) \biggr| 
           \,d\mu_1(x) \biggr) \,d\mu_2(y) \label{eq-split-c}\\
&\quad + \vint_{B''} \biggl| \vint_{B'} u(s,y)\,d\mu_1(s)
      - \vint_{B'} \vint_{B''} u(s,t)\,d\mu_2(t)\,d\mu_1(s) \biggr| \,d\mu_2(y) 
      \nonumber \\
& =: I_1 + I_2.     \nonumber
\end{align}
The first integral $I_1$ can be estimated using the \p-Poincar\'e inequality
for $\mu_1$ and $u(\,\cdot\,,y)$ on $B'$,
and then the H\"older inequality with respect to $\mu_2$, as follows
\begin{align*}
I_1 &\le \vint_{B''} Cr \biggl( \vint_{B'} |\nabla_x u(x,y)|^p\,d\mu_1(x) 
            \biggr)^{1/p} \,d\mu_2(y) \\
&\le Cr \biggl( \vint_{B''} \vint_{B'} |\nabla_x u(x,y)|^p\,d\mu_1(x) 
            \,d\mu_2(y) \biggr)^{1/p}
\le Cr \biggl( \vint_{Q} |\nabla u|^p\,d\mu \biggr)^{1/p}.
\end{align*}

As for the second integral $I_2$ in~\eqref{eq-split-c} we have 
by the Fubini theorem,
\begin{align*}
I_2 &\le \vint_{B''} \vint_{B'} \biggl| u(s,y)
      - \vint_{B''} u(s,t)\,d\mu_2(t) \biggr| \,d\mu_1(s) \,d\mu_2(y) \\
&= \vint_{B'} \vint_{B''} \biggl| u(s,y)
      - \vint_{B''} u(s,t)\,d\mu_2(t) \biggr| \,d\mu_2(y) \,d\mu_1(s),
\end{align*}
which can
be estimated in the same way as $I_1$, by switching the roles of the variables.
Thus
\[
    I_2 \le Cr \biggl( \vint_{Q} |\nabla u|^p\,d\mu \biggr)^{1/p}.
\]
Summing the estimates for $I_1$ and $I_2$ and using the doubling property
for $\mu$ we see that
\[
   \vint_B|u-c|\,d\mu \le C \vint_Q|u-c|\,d\mu
   \le Cr \biggl( \vint_{Q} |\nabla u|^p\,d\mu \biggr)^{1/p}
   \le Cr \biggl( \vint_{2B} |\nabla u|^p\,d\mu \biggr)^{1/p}.
\]
Finally, a standard argument allows us to replace $c$ on the left-hand
side by $u_{B}$ at the cost of an extra factor 2 on the right-hand side,
cf.\ \cite[Lemma~4.17]{BBbook}.
We conclude that 
$\mu$ supports a \p-Poincar\'e inequality on $\R^{n_1+n_2}$,
and thus that $w$ is \p-admissible.
\end{proof}

\begin{remark} \label{rmk-metric}
The proof of Theorem~\ref{thm-p-adm-tensor} easily generalizes to metric spaces.
More precisely, if $(X_j,d_j)$, $j=1,2$, are (not necessarily complete) 
metric  spaces equipped with
doubling measures $\mu_j$ supporting \p-Poincar\'e inequalities with 
dilation
constant $\lambda$ then $X=X_1 \times X_2$, equipped with the product 
measure $\mu=\mu_1 \times \mu_2$, supports a \p-Poincar\'e inequality
with dilation constant $2\la$ 
and $\mu$ is a doubling measure.
See e.g.\  Bj\"orn--Bj\"orn~\cite{BBbook} for the precise definitions
of these notions in metric spaces.

Poincar\'e inequalities in metric spaces are defined using
so-called upper gradients, and the main property needed for the proof
of Theorem~\ref{thm-p-adm-tensor} in the metric setting
is that whenever $g(\,\cdot\,,\cdot\,)$ is an upper gradient
of $u(\,\cdot\,,\cdot\,)$ in $X$ and $y \in X_2$, then $g(\,\cdot\,,y)$ 
is an upper gradient of $u(\,\cdot\,,y)$ with respect to $X_1$, 
and similarly for $g(x,\cdot\,)$ and $u(x,\,\cdot\,)$ with $x \in X_1$.
For this to hold, the metric on $X_1\times X_2$
can actually be defined using
\[
  d((x_1,y_1),(x_2,y_2))=\|(d_1(x_1,x_2),d(y_1,y_2))\|
\]
with an arbitrary norm $\|\cdot\|$ on $\R^2$.
In this generality we cannot assume that $\la=1$, and therefore
$\la$ also needs to be inserted at suitable places in the proof.
(If the norm does not satisfy $\|(x,0)\| \le \|(x,y)\|$ and
$\|(0,y)\| \le \|(x,y)\|$, then
the inclusions~\eqref{eq-Q} need to be modified, necessitating
similar changes also later in the proof.)
We refrain from this generalization in this note. 
Also Theorem~\ref{thm-adm-converse} below
can be similarly generalized to metric spaces.
\end{remark}

We conclude this section by showing that 
Theorem~\ref{thm-p-adm-tensor} admits a converse.

\begin{thm}   \label{thm-adm-converse}
Assume that $\mu=\mu_1 \times \mu_2$ is a \p-admissible measure
 on $\R^{n_1+n_2}$.
Then $\mu_1$ and $\mu_2$ are \p-admissible measures 
on $\R^{n_1}$ and $\R^{n_2}$, respectively. 
\end{thm}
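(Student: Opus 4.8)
The plan is to reuse the sandwiching of balls by the ``quasiballs'' $Q(z,r)=B'(z_1,r)\times B''(z_2,r)$ from the proof of Theorem~\ref{thm-p-adm-tensor}, and to exploit the fact that a function depending only on the first variable lifts to $\R^{n_1+n_2}$ with no change in the size of its gradient. Throughout write $z=(z_1,z_2)$, so that $B(z,r)\subset Q(z,r)\subset B(z,\sqrt2\,r)$ by~\eqref{eq-Q} and $\mu(Q(z,r))=\mu_1(B'(z_1,r))\,\mu_2(B''(z_2,r))$; by symmetry it is enough to treat $\mu_1$. I would first record that $\mu_1$ and $\mu_2$ are locally finite and nontrivial: since $0<\mu(B)<\infty$ for every ball $B\subset\R^{n_1+n_2}$, the sandwich~\eqref{eq-Q} forces $0<\mu_1(B')\,\mu_2(B'')<\infty$ for all balls $B'\subset\R^{n_1}$ and $B''\subset\R^{n_2}$, and hence $0<\mu_1(B')<\infty$ (and similarly for $\mu_2$); this is what makes the divisions below legitimate.

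For the doubling property of $\mu_1$, fix $z_2$ and $r>0$. Then
\[
\mu_1(B'(z_1,2r))\,\mu_2(B''(z_2,2r))=\mu(Q(z,2r))\le\mu(B(z,2\sqrt2\,r))\le C^2\mu(B(z,r))\le C^2\mu(Q(z,r)),
\]
and dividing by $\mu_2(B''(z_2,2r))\ge\mu_2(B''(z_2,r))$ yields $\mu_1(B'(z_1,2r))\le C^2\mu_1(B'(z_1,r))$.

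For the \p-Poincar\'e inequality, let $B'=B'(z_1,r)$ and let $u$ be a bounded locally Lipschitz function on $B'(z_1,\sqrt2\,\lambda r)$, where $\lambda$ is the dilation constant in the \p-Poincar\'e inequality for $\mu$. Fix $z_2$, put $z=(z_1,z_2)$ and $B_1=B(z,\sqrt2\,r)$, and set $\tilde u(x,y)=u(x)$; this is a bounded locally Lipschitz function on $\lambda B_1=B(z,\sqrt2\,\lambda r)$, whose projection onto $\R^{n_1}$ is exactly $B'(z_1,\sqrt2\,\lambda r)$, and $|\nabla\tilde u(x,y)|=|\nabla u(x)|$ a.e. Since $\tilde u$ is independent of $y$, Fubini's theorem gives the exact identity $\vint_{B'}|u-u_{B'}|\,d\mu_1=\vint_{Q(z,r)}|\tilde u-\tilde u_{Q(z,r)}|\,d\mu$. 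Because $B(z,r)\subset Q(z,r)\subset B_1$, an elementary estimate using only the doubling property of $\mu$ bounds the right-hand side by $C\vint_{B_1}|\tilde u-\tilde u_{B_1}|\,d\mu$. Now I would apply the \p-Poincar\'e inequality for $\mu$ on $B_1$ and then pass from $\lambda B_1$ back to a product set via $\lambda B_1\subset Q(z,\sqrt2\,\lambda r)$ and the doubling property of $\mu$, cancelling the factor $\mu_2(B''(z_2,\sqrt2\,\lambda r))$ exactly as in the doubling step; this gives
\[
\vint_{B_1}|\tilde u-\tilde u_{B_1}|\,d\mu\le Cr\biggl(\vint_{\lambda B_1}|\nabla u|^p\,d\mu\biggr)^{1/p}\le Cr\biggl(\vint_{B'(z_1,\sqrt2\,\lambda r)}|\nabla u|^p\,d\mu_1\biggr)^{1/p}.
\]
Since $r=\tfrac12\diam(B')$, chaining the three estimates shows that $\mu_1$ supports a \p-Poincar\'e inequality with dilation constant $\sqrt2\,\lambda$, so $\mu_1$ (and by symmetry $\mu_2$) is \p-admissible.

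The only genuine subtlety is, as in Theorem~\ref{thm-p-adm-tensor}, that balls in $\R^{n_1+n_2}$ are not products of balls, so one repeatedly has to trade a ball for a quasiball $Q(z,r)$ and back; each trade costs only a bounded factor, by the sandwich~\eqref{eq-Q} and the doubling property of $\mu$, and keeping the radii and dilation constants straight is the main bookkeeping task. The lifting $u\mapsto\tilde u$ together with Fubini does everything else essentially for free; in particular one could, if desired, instead invoke the fact that on $\R^n$ the dilation constant in a \p-Poincar\'e inequality may be taken equal to $1$, which lets one finish with dilation constant $\sqrt2$.
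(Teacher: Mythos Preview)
Your proposal is correct and follows essentially the same route as the paper: lift $u$ to $\tilde u(x,y)=u(x)$, use Fubini to rewrite $\vint_{B'}|u-u_{B'}|\,d\mu_1$ as an average over the product set $Q(z,r)$, pass to the surrounding ball $B(z,\sqrt2\,r)$ via doubling, apply the \p-Poincar\'e inequality for $\mu$, and then pass back to a product set to cancel the $\mu_2$ factor. The only notable difference is that the paper uses the fact that on $\R^n$ one may take the dilation constant $\lambda=1$ (so its right-hand side lands on $2B'$), whereas you carry a general $\lambda$ throughout and end with dilation $\sqrt2\,\lambda$; your version is thus slightly more portable (e.g.\ to the metric setting mentioned in Remark~\ref{rmk-metric}), and you are a bit more careful about specifying the domain on which $u$ must be defined.
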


\begin{proof}
It suffices to show the \p-admissibility of $\mu_1$.
Let $B'=(z',r)\subset\R^{n_1}$ be a ball and let 
$B'':=B(0,r)\subset\R^{n_2}$.
Let $u$ be an arbitrary bounded locally Lipschitz function on $B'$
and for $(x,y)\in B'\times B''$ define $v(x,y)=u(x)$.
Then 
\[
v_{B'\times B''} = \vint_{B'} \vint_{B''} v(x,y) \,d\mu_1(x) \,d\mu_2(y) 
    = u_{B'}.
\]
Note that for $z=(z',0)\in \R^{n_1+n_2}$,
\begin{equation}  \label{eq-comp-balls}
B(z,r) \subset B'\times B'' \subset B(z,\sqrt2 r) =: \Bh
\subset 2B'\times 2B'' \subset B(z,2\sqrt2 r).
\end{equation}
It then follows from the doubling property of $\mu$ that
\[
\mu_1(2B')\mu_2(2B'') \le \mu(B(z,2\sqrt2 r))
   \le C \mu(B(z,r)) \le C \mu_1(B')\mu_2(B'')
\]
and division by $\mu_2(2B'')\ge \mu_2(B'')$ yields
$\mu_1(2B') \le C \mu_1(B')$, i.e.\ $\mu_1$ is doubling.

As for the \-Poincar\'e inequality, we have by~\eqref{eq-comp-balls},
the doubling property of $\mu$ and \cite[Lemma~4.17]{BBbook} that
\begin{align*}
\vint_{B'} |u-u_{B'}| \,d\mu_1 &= \vint_{B'\times B''} |v-v_{B'\times B''}|\,d\mu
   \le 2 \vint_{B'\times B''} |v-v_{\Bh}|\,d\mu \\
   &\le C \vint_{\Bh} |v-v_{\Bh}|\,d\mu. 
\end{align*}
The last integral is estimated using the \p-Poincar\'e inequality
for $\mu$ and the fact that $\grad v(x,y)=\grad u(x)$ as follows
\begin{align*}
 \vint_{\Bh} |v-v_{\Bh}|\,d\mu  
  &\le C r \biggl( \vint_{\Bh} |\grad v|^p \,d\mu \biggr)^{1/p}
      \le C r \biggl( \vint_{2B'\times 2B''} |\grad v|^p \,d\mu \biggr)^{1/p} \\
  & \le C r \biggl( \vint_{2B'} |\grad u|^p \,d\mu_1 \biggr)^{1/p}.\qedhere
\end{align*}
\end{proof}

\section{Tensor products and sums of quasiminimizers}
\label{sect-tensor}

Throughout this section, $1<p<\infty$ and 
$\R^{n_j}$ is equipped with a \p-admissible weight $w_j$, $j=1,2$.
It follows from Theorem~\ref{thm-p-adm-tensor} that
$w=w_1 \otimes w_2$ is \p-admissible on $\R^{n_1+n_2}$.
We let $d\mu_j=w_j \, dx$, $j=1,2$, and $d\mu =w \,dx$.

Our aim is to prove Theorem~\ref{thm-tensor-prod}.
We will also obtain similar results for quasisuperminimizers, 
which we now define.
Let $\Om\subset\R^n$ be an open set.
By $\Lip_0(\Om)$ we denote the space of boundedly supported
Lipschitz functions vanishing outside~$\Om$.

\begin{deff}  \label{deff-quasi-super-min}
A 
function $u: \Om \to [-\infty,\infty]$
is a \emph{Q-quasi\-\/\textup{(}sub\/\textup{/}super\/\textup{)}\-minimizer}
with respect to a \p-admissible weight $w$
in a nonempty open set $\Om \subset \R^n$ if $u \in \Wploc(\Om;\mu)$ and 
\[ 
      \int_{\phi \ne 0} |\grad u|^p\, d\mu
	\le   Q    \int_{\phi \ne 0} |\grad (u+\phi)|^p\, d\mu
\] 
for all (nonpositive/nonnegative) $\phi \in \Lip_0(\Om)$.
\end{deff}

By splitting $\phi$ into its positive
and negative parts, it is easily seen that
a function is a $Q$-quasiminimizer   if and only if
it is both a $Q$-quasi\-sub\-minimizer and a $Q$-quasi\-super\-minimizer.

The Sobolev space $\Wploc(\Om;\mu)$ is defined as in 
Heinonen--Kilpel\"ainen--Martio~\cite{HeKiMa}
(although they use the letter $H$ instead of $W$). 
See \cite[Section~1.9]{HeKiMa}
and \cite[Proposition~A.17]{BBbook}
for the definition of the gradient $\grad u$ for $u \in \Wploc(\Om;\mu)$, 
which need not be the distributional gradient of $u$.

Definition~\ref{deff-quasi-super-min} is one of several
equivalent definitions of quasi(sub/super)minimizers,
see Bj\"orn~\cite[Proposition~3.2]{ABkellogg}, where this was shown
on metric spaces.
It follows from Propositions~A.11 and~A.17 in \cite{BBbook} that the 
metric space definitions coincide with the usual ones on weighted $\R^n$
(with a \p-admissible weight).

For quasisuperminimizers, an analogue of Theorem~\ref{thm-tensor-prod} 
takes the following form.

\begin{thm}  \label{thm-tensor-prod-super}
Let $u_j$ be a $Q_j$-quasisuperminimizer in $\Om_j\subset\R^{n_j}$
with respect to \p-admissible weights  $w_j$,
$j=1,2$, and $Q$ be given by \eqref{eq-Q-intro}.
Then $u_1 \oplus u_2$ is a $Q$-quasisuperminimizer in 
$\Om=\Om_1 \times \Om_2$ with respect to $w= w_1 \otimes w_2$.

In addition, if both $u_1$ and $u_2$ are nonnegative/nonpositive,
then $u_1 \otimes u_2$ is a 
$Q$-quasisuper/sub\-mi\-ni\-mizer  in~$\Om$ with respect to $w$.
\end{thm}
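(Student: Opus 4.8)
The plan is to extract the combinatorial core of the statement into a single ``separation'' lemma that is insensitive to the sign of the test function, and then to supply its hypotheses by slicing with Fubini's theorem. Concretely, I would first prove: \emph{if $v\in\Wploc(\Om;\mu)$ and $\phi\in\Lip_0(\Om)$ satisfy}
\[
\int_{\phi\ne0}|\grad_x v|^p\,d\mu\le Q_1\int_{\phi\ne0}|\grad_x(v+\phi)|^p\,d\mu
\quad\text{and}\quad
\int_{\phi\ne0}|\grad_y v|^p\,d\mu\le Q_2\int_{\phi\ne0}|\grad_y(v+\phi)|^p\,d\mu,
\]
\emph{then $\int_{\phi\ne0}|\grad v|^p\,d\mu\le Q\int_{\phi\ne0}|\grad(v+\phi)|^p\,d\mu$, with $Q$ given by \eqref{eq-Q-intro}.} Since $|\grad w|^2=|\grad_x w|^2+|\grad_y w|^2$ for every $w\in\Wploc(\Om;\mu)$, the sign of $\phi$ plays no role, so this one lemma serves the quasisuper-, quasisub- and quasiminimizer versions alike; what changes between them is only how the two sliced hypotheses are obtained.

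To prove the lemma I would argue pointwise and then optimize a parameter. Write $a=|\grad_x v|$, $b=|\grad_y v|$, $A=|\grad_x(v+\phi)|$, $B=|\grad_y(v+\phi)|$ on $\{\phi\ne0\}$, and assume $Q_1,Q_2>0$ (if, say, $Q_1=0$ the first hypothesis forces $\grad_x v=0$ a.e.\ on $\{\phi\ne0\}$ and the conclusion with $Q=Q_2$ is immediate). For $p>2$, Jensen's inequality for the convex function $t\mapsto t^{p/2}$ gives $(a^2+b^2)^{p/2}\le\la^{1-p/2}a^p+(1-\la)^{1-p/2}b^p$ for each $\la\in(0,1)$, while superadditivity of $t\mapsto t^{p/2}$ gives $A^p+B^p\le(A^2+B^2)^{p/2}$; integrating, inserting the two hypotheses and bounding the resulting mixed sum by its larger coefficient yields
\[
\int_{\phi\ne0}|\grad v|^p\,d\mu\le\max\bigl\{\la^{-(p-2)/2}Q_1,\,(1-\la)^{-(p-2)/2}Q_2\bigr\}\int_{\phi\ne0}|\grad(v+\phi)|^p\,d\mu .
\]
For $1<p<2$ the roles of the primed and unprimed quantities swap: subadditivity gives $(a^2+b^2)^{p/2}\le a^p+b^p$ and the reverse Jensen inequality gives $(A^2+B^2)^{p/2}\ge\la^{1-p/2}A^p+(1-\la)^{1-p/2}B^p$, producing the same bound with $(p-2)/2$ replaced by $(2-p)/2$; and for $p=2$ everything is linear, giving $\max\{Q_1,Q_2\}$ directly. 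In all cases the lemma now follows from the elementary computation $\min_{\la\in(0,1)}\max\{\la^{-|p-2|/2}Q_1,(1-\la)^{-|p-2|/2}Q_2\}=(Q_1^{2/|p-2|}+Q_2^{2/|p-2|})^{|p-2|/2}$, the minimum being attained where the two competing terms are equal.

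It remains to verify the slice hypotheses for the tensor constructions. One first checks (by Fubini, using crucially that each factor depends on a single block of variables) that $u_1\oplus u_2$ and $u_1\otimes u_2$ lie in $\Wploc(\Om;\mu)$, with $\grad(u_1\oplus u_2)=(\grad u_1,\grad u_2)$ and $\grad(u_1\otimes u_2)=(u_2\grad u_1,u_1\grad u_2)$, and that for a.e.\ $y$ the slices again lie in $\Wploc(\Om_1;\mu_1)$. For the tensor sum the slices are then trivial: for a.e.\ $y\in\Om_2$ the map $x\mapsto(u_1\oplus u_2)(x,y)=u_1(x)+u_2(y)$ differs from $u_1$ by a constant, hence is a $Q_1$-quasisuperminimizer in $\Om_1$; testing it against the nonnegative $\phi(\,\cdot\,,y)\in\Lip_0(\Om_1)$ and integrating in $y$ gives the $x$-slice hypothesis, and the $y$-slice hypothesis is symmetric, so the lemma applies. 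For the tensor product the same scheme works once one observes that a \emph{nonnegative} constant multiple of a $Q_1$-quasisuperminimizer is again a $Q_1$-quasisuperminimizer (just rescale the test function), whereas a \emph{nonpositive} multiple is a $Q_1$-quasisubminimizer. This is precisely where the sign hypotheses enter: if $u_1,u_2\ge0$, then for a.e.\ $y$ the slice $x\mapsto u_2(y)u_1(x)$ is a $Q_1$-quasisuperminimizer, and testing against nonnegative $\phi(\,\cdot\,,y)$ and integrating gives the sliced hypotheses with nonnegative $\phi$; if $u_1,u_2\le0$ the slices are quasisubminimizers and the same reasoning, now with nonpositive $\phi$, gives the sliced hypotheses in that case. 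Applying the lemma finishes both parts (and the quasiminimizer statement of Theorem~\ref{thm-tensor-prod} follows in the same way, with no sign restriction, since an arbitrary nonzero multiple of a $Q$-quasiminimizer is again a $Q$-quasiminimizer).

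I expect the main obstacle to be the Sobolev-space bookkeeping rather than the estimates: checking that the tensor constructions and their one-variable slices really belong to the \p-admissibly weighted Sobolev spaces, and that $\grad$ decomposes as $(\grad_x,\grad_y)$ with the stated sum/product rules, needs care because for \p-admissible weights the gradient is not the distributional one and the relevant Leibniz/restriction rules must be taken from Heinonen--Kilpel\"ainen--Martio~\cite{HeKiMa}. The second point to get right is sharpness of $Q$: one must choose the interpolation parameter $\la$ as a function of $Q_1$, $Q_2$ and $p$ rather than settling for the symmetric choice $\la=\tfrac12$, which would only give the weaker constant $2^{|p-2|/2}\max\{Q_1,Q_2\}$ in place of \eqref{eq-Q-intro}.
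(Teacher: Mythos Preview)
Your proposal is correct and follows essentially the same route as the paper: slice with Fubini to obtain the two partial-gradient inequalities \eqref{est-grad-u1}--\eqref{est-grad-u2}, then combine them via H\"older/Jensen-type two-term inequalities, with the sign restriction on $u_1,u_2$ entering exactly where you say (a nonnegative multiple of a quasisuperminimizer is again one). The only difference is cosmetic---the paper applies H\"older directly in the cases $p\le2$ and $p\ge2$ rather than your $\lambda$-optimization, but these are equivalent two-point arguments and yield the same constant~$Q$.
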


By considering $-u_1$ and $-u_2$, we easily obtain a corresponding
result for quasisubminimizers.
Usually, 
$Q_j \ge 1$ but we also allow for $Q_j=0$.
This can only happen when $u_j$ is constant (a.e.\ in each
component of $\Om_j$),
but when this is fulfilled 
in Theorem~\ref{thm-tensor-prod} or~\ref{thm-tensor-prod-super}
it immediately implies the following conclusion.

\begin{cor} \label{cor-u-const}
  If $u$ is a $Q$-quasi\/\textup{(}super\/\textup{)}minimizer
in $\Om\subset\R^{n_1}$
with respect to a \p-admissible weight $w_1$,
and we let 
\(
v(x,y)=u(x)
\)
for $(x,y)\in\Om\times\R^{n_2}$, 
then $v$ is a $Q$-quasi\/\textup{(}super\/\textup{)}minimizer 
in $\Om\times\R^{n_2}$
with respect to $w=w_1 \otimes w_2$,
whenever $w_2$ is a \p-admissible weight on $\R^{n_2}$.
\end{cor}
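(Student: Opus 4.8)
The plan is to deduce this corollary directly from Theorem~\ref{thm-tensor-prod} (for the quasiminimizer case) and from Theorem~\ref{thm-tensor-prod-super} (for the quasisuperminimizer case), by taking the second factor to be identically zero. The point is that the hypotheses on $w_2$ are exactly those needed so that $\mu_2=w_2\,dx$, and hence $w=w_1\otimes w_2$, is \p-admissible, which is already supplied by Theorem~\ref{thm-p-adm-tensor}.

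First I would record that the constant function $u_2\equiv0$ on $\R^{n_2}$ lies in $\Wploc(\R^{n_2};\mu_2)$ with vanishing gradient, so that $\int_{\phi\ne0}|\grad u_2|^p\,d\mu_2=0$ for every $\phi\in\Lip_0(\R^{n_2})$; hence $u_2$ is a $0$-quasiminimizer, and in particular a $0$-quasisuperminimizer, on $\R^{n_2}$ with respect to $w_2$, in the sense of Definition~\ref{deff-quasi-super-min}. Next, observe that $v(x,y)=u(x)=u(x)+u_2(y)=(u\oplus u_2)(x,y)$, so $v$ is precisely the tensor sum of $u$ and $u_2$ on $\Om\times\R^{n_2}$.

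Now apply Theorem~\ref{thm-tensor-prod} (respectively Theorem~\ref{thm-tensor-prod-super}) with $u_1=u$, $\Om_1=\Om$, $Q_1=Q$ and $u_2\equiv0$, $\Om_2=\R^{n_2}$, $Q_2=0$. This yields that $v$ is a $Q'$-quasi\/(super\/)minimizer in $\Om\times\R^{n_2}$ with respect to $w=w_1\otimes w_2$, where $Q'$ is given by~\eqref{eq-Q-intro}. It only remains to check that $Q'=Q$: when $p=2$ this is $\max\{Q,0\}=Q$ since $Q\ge0$, while for $p\ne2$ we get $Q'=(Q^{2/|p-2|}+0^{2/|p-2|})^{|p-2|/2}=(Q^{2/|p-2|})^{|p-2|/2}=Q$, using that $2/|p-2|>0$ so that $0^{2/|p-2|}=0$.

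There is essentially no obstacle here; the only points requiring a moment's care are the verification that a constant function is a $0$-quasi\/(super\/)minimizer (which is exactly why the value $Q=0$ was admitted in the discussion preceding the corollary) and the elementary check that~\eqref{eq-Q-intro} collapses to $Q$ when $Q_2=0$. If one preferred a self-contained argument not invoking the tensor sum theorem, one could instead slice: for $\phi\in\Lip_0(\Om\times\R^{n_2})$ (nonnegative in the quasisuperminimizer case) one has $\grad v(x,y)=\grad u(x)$ and $|\grad(v+\phi)(x,y)|\ge|\grad u(x)+\grad_x\phi(x,y)|$, so applying the defining inequality for $u$ to $x\mapsto\phi(x,y)\in\Lip_0(\Om)$ for a.e.\ $y$ and integrating in $y$ against $d\mu_2$, via the Fubini theorem, gives the claim directly.
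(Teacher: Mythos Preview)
Your proposal is correct and follows exactly the same approach as the paper: write $v=u\oplus\mathbf{0}$ with $\mathbf{0}$ the zero function (a $0$-quasi(super)minimizer), and invoke Theorems~\ref{thm-tensor-prod} and~\ref{thm-tensor-prod-super}. You simply spell out the details that the paper leaves implicit, including the verification that~\eqref{eq-Q-intro} collapses to $Q$ when $Q_2=0$ (which the paper notes just before stating the corollary).
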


\begin{proof}
As $v =  u \oplus \mathbf{0}$,
where $\mathbf{0}$ is the zero function,
this follows directly from 
Theorems~\ref{thm-tensor-prod} and~\ref{thm-tensor-prod-super}.
\end{proof}

\begin{proof}[Proof of Theorem~\ref{thm-tensor-prod}]
Since $u_1$ and $u_2$ are finite a.e., and the quasiminimizing
property is the same for all representatives of an equivalence class
in the local 
Sobolev space, we may assume that $u_1$ and $u_2$ are finite
everywhere.

First, we show that $u:=u_1 \otimes u_2$ is a $Q$-quasiminimizer.
Note that  
\[
|\grad u(x,y)|^p = (|\grad_x u(x,y)|^2 + |\grad_y u(x,y)|^2)^{p/2},
\]
where $\grad_x u(x,y) = u_2(y) \grad u_1(x)$ and 
$\grad_y u(x,y) = u_1(x) \grad u_2(y)$.

Let $\phi\in\Lip_0(\Om)$ be arbitrary. 
For a fixed $y\in\Om_2$, let 
\[
\Om_1^y=\{x\in\Om_1: \phi(x,y)\ne0\}.
\]
As $u_1$ is a $Q_1$-quasiminimizer in $\Om_1$, so is 
$u(\,\cdot\,,y)=u_2(y)u_1(\,\cdot\,)$.
Since $\phi(\,\cdot\,, y)\in \Lip_0(\Om_1^y)$, we get
\[
\int_{\Om_1^y} |\nabla_x u(x,y)|^p \,d\mu_1(x)
       \le Q_1 \int_{\Om_1^y} |\grad_x (u(x,y)+\phi(x,y))|^p\,d\mu_1(x).
\]
Integrating over all $y\in\Om_2$ with nonempty $\Om_1^y$ yields 
\begin{equation}
\int_{\phi\ne0} |\grad_x u|^p\,d\mu
       \le Q_1 \int_{\phi\ne0} |\grad_x (u+\phi)|^p\,d\mu.
\label{est-grad-u1}
\end{equation}
Similarly,
\begin{equation}
\int_{\phi\ne0} |\grad_y u|^p\,d\mu
       \le Q_2 \int_{\phi\ne0} |\grad_y (u+\phi)|^p\,d\mu.
\label{est-grad-u2}
\end{equation}
Now we consider four cases.

\medskip
\emph{Case} 1. $Q_1=0$.
In this case, $\nabla u_1 \equiv 0$ a.e., and so $\nabla_x u \equiv 0$
a.e.
Hence, by \eqref{est-grad-u2},
\[
\int_{\phi\ne0} |\nabla u|^p\,d\mu
   = \int_{\phi\ne0} |\grad_y u|^p\,d\mu
   \le Q_2 \int_{\phi\ne0} |\grad_y (u+\phi)|^p\,d\mu
   \le Q_2 \int_{\phi\ne0} |\nabla (u+\phi)|^p\,d\mu,
\]
and thus $u$ is a $Q_2$-quasiminimizer.

\medskip
\emph{Case} 2. $Q_2=0$.
This is similar to Case~1.

\medskip
\emph{Case} 3. $p \le 2$.
In this case, summing \eqref{est-grad-u1} and
\eqref{est-grad-u2} gives 
\begin{align*}
\int_{\phi\ne0} |\grad u|^p \,d\mu
  &\le \int_{\phi\ne0} (|\grad_x u|^p + |\grad_y u|^p) \,d\mu \\
       &\le  \int_{\phi\ne0} (Q_1 |\grad_x (u+\phi)|^p
            + Q_2 |\grad_y (u+\phi)|^p ) \,d\mu.
\end{align*}
This proves the result for $p=2$.  For $p<2$, the H\"older inequality
applied to the sum $Q_1a^p + Q_2b^p$ in the last integrand shows that
\begin{align*}
\int_{\phi\ne0} |\grad u|^p \,d\mu
   &
\le \Bigl( Q_1^{2/(2-p)} + Q_2^{2/(2-p)} \Bigr)^{1-p/2} \\
& \quad \times                 \int_{\phi\ne0} (|\grad_x (u+\phi)|^2
            + |\grad_y (u+\phi)|^2)^{p/2} \,d\mu \\
   &
   = \Bigl( Q_1^{2/(2-p)} + Q_2^{2/(2-p)} \Bigr)^{1-p/2} 
                 \int_{\phi\ne0} |\grad (u+\phi)|^p \,d\mu.
\end{align*}

\medskip
\emph{Case} 4. $p \ge 2$ and $Q_1,Q_2 >0$.
Rewrite $|\grad u|^p$ as 
\[
|\grad u|^p = (|\grad_x u|^2 + |\grad_y u|^2)^{p/2}
   = \biggl( Q_1^{2/p} \biggl(\frac{1}{Q_1}\biggr)^{2/p}|\grad_x u|^2 
   + Q_2^{2/p} \biggl(\frac{1}{Q_2}\biggr)^{2/p}|\grad_y u|^2 \biggr)^{p/2}.
\]
The H\"older inequality applied to the sum $Q_1^{2/p}a^2 + Q_2^{2/p}b^2$ 
implies
\[
|\grad u|^p 
        \le \Bigl( Q_1^{2/(p-2)} + Q_2^{2/(p-2)}\Bigr)^{(p-2)/2}
   \biggl(\frac{1}{Q_1}|\grad_x u|^p + \frac{1}{Q_2}|\grad_y u|^p\biggr).
\]
Integrating over the set $\{(x,y)\in\Om:\phi(x,y)\ne0\}$ 
and using \eqref{est-grad-u1} and \eqref{est-grad-u2} we  obtain
\begin{align*}
\int_{\phi\ne0} |\grad u|^p \,d\mu 
&
    \le \Bigl( Q_1^{2/(p-2)} + Q_2^{2/(p-2)}\Bigr)^{(p-2)/2} \\
&     \quad \times
          \int_{\phi\ne0} (|\grad_x (u+\phi)|^p
            + |\grad_y (u+\phi)|^p ) \,d\mu.
\end{align*}
As $p/2 \ge 1$, the elementary inequality $a^p+b^p\le (a^2+b^2)^{p/2}$
concludes the proof for $u$.

\medskip

We now turn to $v:=u_1 \oplus u_2$.
Let $\phi\in\Lip_0(\Om)$ be arbitrary. 
Note that   
\[   
|\grad v(x,y)|^p
= (|\grad_x v(x,y)|^2 + |\grad_y v(x,y)|^2)^{p/2}
= (|\grad u_1(x)|^2 + |\grad u_2(y)|^2)^{p/2} 
\]   
and 
\[ 
|\grad (v+\phi)|^p = (|\grad_x (v+\phi)|^2 + |\grad_y (v+\phi)|^2)^{p/2}.
\] 
For a fixed $y\in\Om_2$, let 
\[
\Om_1^y=\{x\in\Om_1: \phi(x,y)\ne0\}.
\]
As $u_1$ is a $Q_1$-quasiminimizer in $\Om_1$ and
$\phi(\,\cdot\,, y)\in \Lip_0(\Om_1^y)$, we get
\[
\int_{\Om_1^y} |\grad u_1(x)|^p\,d\mu_1(x) 
       \le Q_1 \int_{\Om_1^y} |\grad_x (u_1(x,y)+\phi(x,y))|^p\,d\mu_1(x).
\]
Integrating over all $y\in\Om_2$ with nonempty $\Om_1^y$ yields 
\begin{equation*}
\int_{\phi\ne0} |\grad u_1|^p\,d\mu_1(x) \,d\mu_2(y)
       \le Q_1 \int_{\phi\ne0} |\grad_x (v+\phi)|^p\,d\mu_1(x)\,d\mu_2(y),
\end{equation*}
i.e.\ \eqref{est-grad-u1} holds.
Similarly, \eqref{est-grad-u2} holds and the rest of the proof is as
for $u$.
\end{proof}

\begin{proof}[Proof of Theorem~\ref{thm-tensor-prod-super}]
This proof is very similar to the proof above.
In this case we of course assume that $\phi \in \Lip_0(\Om)$
is nonnegative/nonpositive.

The only other difference in the proof is that since 
$u_1$ is a $Q_1$-quasi\-super\-minimizer in $\Om_1$ 
and $u_2(y)$ is nonnegative/nonpositive,
we can conclude that 
\[
u(\,\cdot\,,y)=u_2(y)u_1(\,\cdot\,)
\]
is a $Q_1$-quasisuper/subminimizer in $\Om_1$.
The rest of the proof is the same;
in particular the proof for $v$ needs no nontrivial changes, and is thus
valid also when $u_1$ and $u_2$ change sign.
\end{proof}

For tensor sums one can use Theorem~\ref{thm-tensor-prod-super}
to deduce (the corresponding part of) Theorem~\ref{thm-tensor-prod}.
For tensor products this is not possible as in this case the
quasisuperminimizers in Theorem~\ref{thm-tensor-prod-super}
need to be nonnegative.
This nonnegativity is an essential assumption for quasisuperminimizers,
which is not required for quasiminimizers.
(To see this consider what happens when $u_2 \equiv -1$.)
We can however obtain the following result.

\begin{thm}
Let $u_1$ be a 
$Q_1$-quasisub/super\-minimizer 
in $\Om_1$ and $u_2 \ge 0$ be a $Q_2$-quasiminimizer in $\Om_2$,
with respect to \p-admissible weights $w_1$ and $w_2$, respectively.

Then $u_1 \otimes u_2$ is a 
$Q$-quasisub/super\-minimizer in $\Om= \Om_1 \times \Om_2$
with respect to $w=w_1 \otimes w_2$, 
where $Q$ is given by \eqref{eq-Q-intro}.
\end{thm}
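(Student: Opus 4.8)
The plan is to follow the proof of the tensor-product part of Theorem~\ref{thm-tensor-prod} almost verbatim, the only genuinely new ingredient being a sign bookkeeping that pins down where the hypothesis $u_2\ge0$ is used. As there, we may assume that $u_1$ and $u_2$ are finite everywhere and lie in the relevant local Sobolev spaces; then $u:=u_1\otimes u_2$ belongs to $\Wploc(\Om;\mu)$ (this follows as in the proof of Theorem~\ref{thm-tensor-prod} from the local boundedness of the continuous representatives of $u_1$ and $u_2$ together with $\grad u(x,y)=(u_2(y)\grad u_1(x),\,u_1(x)\grad u_2(y))$), and we recall the pointwise identity $|\grad u(x,y)|^p=(|\grad_x u(x,y)|^2+|\grad_y u(x,y)|^2)^{p/2}$ with $\grad_x u(x,y)=u_2(y)\grad u_1(x)$ and $\grad_y u(x,y)=u_1(x)\grad u_2(y)$. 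Fix $\phi\in\Lip_0(\Om)$, nonpositive in the quasisubminimizer case and nonnegative in the quasisuperminimizer case, and for fixed $y$ (resp.\ fixed $x$) set $\Om_1^y=\{x\in\Om_1:\phi(x,y)\ne0\}$ (resp.\ $\Om_2^x=\{y\in\Om_2:\phi(x,y)\ne0\}$).

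The first step is the scaling observation: a nonnegative scalar multiple of a $Q_1$-quasisub/superminimizer is again a $Q_1$-quasisub/superminimizer (the zero multiple giving the zero function, which is a $0$-quasiminimizer), while \emph{any} scalar multiple of a $Q_2$-quasiminimizer is a $Q_2$-quasiminimizer; both facts are immediate from the definition upon replacing the test function by a suitable rescaling, and in the second case the sign of the scalar is irrelevant since no sign is imposed on the test functions for quasiminimizers. Hence, for every $y\in\Om_2$, as $u_2(y)\ge0$, the slice $u(\,\cdot\,,y)=u_2(y)u_1(\,\cdot\,)$ is a $Q_1$-quasisub/superminimizer in $\Om_1$; and for every $x\in\Om_1$, the slice $u(x,\,\cdot\,)=u_1(x)u_2(\,\cdot\,)$ is a $Q_2$-quasiminimizer in $\Om_2$, whatever the sign of $u_1(x)$. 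This is exactly where $u_2\ge0$ enters: if $u_2$ changed sign, the scalar $u_2(y)$ would reverse the sub/super direction in the first slice.

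The second step recovers \eqref{est-grad-u1} and \eqref{est-grad-u2}. For fixed $y$ the function $\phi(\,\cdot\,,y)$ is nonpositive/nonnegative and lies in $\Lip_0(\Om_1^y)$, so the quasisub/superminimizing inequality for $u(\,\cdot\,,y)$ gives $\int_{\Om_1^y}|\grad_x u(x,y)|^p\,d\mu_1(x)\le Q_1\int_{\Om_1^y}|\grad_x(u(x,y)+\phi(x,y))|^p\,d\mu_1(x)$; integrating in $y$ and using the Fubini theorem yields \eqref{est-grad-u1}. For fixed $x$ the function $\phi(x,\,\cdot\,)$ lies in $\Lip_0(\Om_2^x)$ with no sign restriction needed, so the quasiminimizing inequality for $u(x,\,\cdot\,)$ together with integration in $x$ yields \eqref{est-grad-u2}. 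From here the four-case argument in the proof of Theorem~\ref{thm-tensor-prod} (Case~1: $Q_1=0$; Case~2: $Q_2=0$; Case~3: $p\le2$; Case~4: $p\ge2$ with $Q_1,Q_2>0$), which uses only \eqref{est-grad-u1}, \eqref{est-grad-u2}, the H\"older inequality and the elementary inequalities between $a^p+b^p$ and $(a^2+b^2)^{p/2}$, applies word for word and gives $\int_{\phi\ne0}|\grad u|^p\,d\mu\le Q\int_{\phi\ne0}|\grad(u+\phi)|^p\,d\mu$ with $Q$ as in \eqref{eq-Q-intro}. As $\phi$ was an arbitrary nonpositive/nonnegative element of $\Lip_0(\Om)$, it follows that $u=u_1\otimes u_2$ is a $Q$-quasisub/superminimizer in $\Om$.

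The main, and essentially only, obstacle is the sign bookkeeping of the first step; everything else is a transcription of the proof of Theorem~\ref{thm-tensor-prod}, and in particular no analogue of the $v=u_1\oplus u_2$ part of that proof is needed here.
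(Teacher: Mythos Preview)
Your proposal is correct and follows essentially the same approach as the paper: the paper's own proof simply says that one modifies the proof of Theorem~\ref{thm-tensor-prod} exactly as in Theorem~\ref{thm-tensor-prod-super}, and points out that the key fact is that quasiminimizers are preserved under multiplication by arbitrary real numbers while quasisub/superminimizers are preserved only under multiplication by nonnegative reals --- precisely the sign bookkeeping you carry out. One small imprecision: in your parenthetical remark you refer to the ``continuous representative'' of $u_1$, but a quasisub/superminimizer need not have one (only a semicontinuous representative); the paper sidesteps this by not explicitly verifying $u\in\Wploc(\Om;\mu)$, so this does not affect the comparison.
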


\begin{proof}
This is proved using a similar modification of the proof
of Theorem~\ref{thm-tensor-prod} as we
did when proving  Theorem~\ref{thm-tensor-prod-super}.
The key fact is that quasiminimizers are preserved 
under multiplication by real numbers, while
the corresponding fact for quasi\-sub/super\-minimizers is only 
true under multiplication by nonnegative real numbers.
\end{proof}

\end{document}